


 \documentclass[final,5p, twocolumn]{elsarticle}    



\usepackage{color}

\usepackage[utf8]{inputenc}
\usepackage{longtable}
\usepackage{amssymb}

\usepackage{flushend}

\usepackage{mathrsfs}
\usepackage{amsmath}

\usepackage{amsthm}

\newtheorem{thm}{Theorem}
\newtheorem{lem}[thm]{Lemma}

\newtheorem{prob}{Problem}

\newtheorem{defn}{Definition}

\theoremstyle{definition} 

\newtheorem{rem}{Remark}

\usepackage{cuted}
\usepackage{flushend}

\usepackage{tikz}
\usepackage{tkz-graph}
\usepackage{graphicx}

\usepackage{enumerate}

\usepackage{xcolor}
\usepackage{hyperref}

\hypersetup{
    colorlinks = true,
	linkcolor={red!80!black},
	citecolor={green!80!black},
	urlcolor={blue!80!black}
}



\begin{document}
	
	\begin{frontmatter}
		
		
		
		\title{{\bf $\mathcal{H}_2$ Suboptimal Output Synchronization of Heterogeneous Multi-Agent Systems}}

		\author{Junjie Jiao}\ead{j.jiao@rug.nl}
		\author{Harry L. Trentelman}\ead{h.l.trentelman@rug.nl}
		\author{M. Kanat Camlibel}\ead{m.k.camlibel@rug.nl}
		
		\address{Bernoulli Institute for Mathematics, Computer Science and Artificial Intelligence, University of Groningen, 9700 AK Groningen, The Netherlands}
		
		\begin{keyword}
 Output synchronization, $\mathcal{H}_2$ optimal control, dynamic protocols, suboptimal control, dynamic output feedback
			
			
			
		\end{keyword}

		\begin{abstract}
This paper deals with the $\mathcal{H}_2$ suboptimal output synchronization problem for heterogeneous linear multi-agent systems. Given a multi-agent system with possibly distinct agents and an associated $\mathcal{H}_2$ cost functional, the aim is to design output feedback based protocols that guarantee the associated cost to be smaller than a given upper bound while the controlled network achieves output synchronization. 
A design method is provided to compute such protocols.
For each agent, the computation of its two local control gains involves two Riccati inequalities, each of dimension equal to the state space dimension of the agent. 
%
A simulation example is provided to illustrate the performance of the proposed protocols.
		\end{abstract}

	\end{frontmatter}
	

\section{Introduction}\label{sec_intro}
Over the last two decades, the problems of designing protocols   that  achieve consensus or synchronization in multi-agent systems have attracted much attention in the field of systems and control, see e.g. \cite{Olfati-Saber2004}, \cite{tamas2008}, \cite{zhongkui_li_unified_2010} and \cite{6303906}.  
The essential feature of these problems is that, while each agent makes use of only local state or output information to implement its  own local controller, the resulting  global protocol will   achieve consensus or synchronization for the global controlled multi-agent network \cite{Scardovi2009},  \cite{harry_2013}.
One of the challenging problems in this context is the problem of designing protocols that minimize given quadratic cost criteria while achieving consensus or synchronization, see e.g. \cite{Jiao2018}, \cite{Jiao2020H2output}, \cite{wei_ren2010}, \cite{kristian2014} and \cite{Nguyen2015}.
Due to the structural constraints imposed on the protocols, such optimal control problems are non-convex  and very difficult to solve. It is also unclear whether  in general    closed form solutions exist.

In the past, many efforts have been devoted to designing distributed protocols for {\em homogeneous} multi-agent systems that guarantee suboptimal or optimal performance and achieve {\em state} synchronization or consensus. 
In \cite{wei_ren2010}, this was done  for distributed linear quadratic control of multi-agent systems with {\em single integrator} agent dynamics, see also \cite{Jiao2019local}. 
In \cite{Nguyen2015} and \cite{Jiao2018},   multi-agent systems with general agent dynamics and a global linear quadratic cost functional were considered. 
In \cite{kristian2014} and \cite{huaguang_zhang2015}, an inverse optimal approach was adopted to address the distributed linear quadratic control problem, see also \cite{nguyen_2017}. 
For $\mathcal{H}_2$ cost functionals of a particular form,  \cite{LI2011797} and \cite{Zhongkui2014} proposed distributed suboptimal protocols that stabilize the controlled multi-agent network. 
In \cite{JIAO2018154}, a distributed $\mathcal{H}_2$ suboptimal control problem was addressed using static state feedback.  The results in \cite{JIAO2018154} were then generalized in \cite{Jiao2020H2output} to the case of dynamic output feedback.

More recently,    {\em output} synchronization problems for {\em heterogeneous} multi-agent systems have also attracted much attention. 
In \cite{WIELAND20111068}, it was shown that solvability of certain regulator equations is a necessary condition for output synchronization of heterogeneous multi-agent systems, and suitable protocols were proposed,  see also \cite{GRIP20122444}.
In \cite{Shim2011}, by embedding an internal model in the local controller of each agent, 
dynamic output feedback based protocols were proposed for a class of heterogeneous uncertain   multi-agent systems.
In \cite{Lunze2012}, it was shown that the outputs of the agents can be synchronized by a networked protocol if and only if these agents have certain dynamics in common. 
Later on, in \cite{Lunze2013} a linear quadratic control method was adopted for computing output synchronizing  protocols.
In  \cite{JIAO2016361}, an $\mathcal{L}_2$-gain output synchronization problem was addressed by casting this problem into a number of $\mathcal{L}_2$-gain stabilization problems for  certain linear systems, where the state space dimensions of these systems are equal to that of the agents.
For related work, we also mention \cite{Stephan2020},   \cite{ZHANG20142515} and \cite{SEYBOTH2015392}, to name a few.

Up to now, little attention has been paid in the literature to problems of designing output synchronizing protocols for heterogeneous multi-agent systems that guarantee  a certain performance.
In the present paper, we will deal with the problem of $\mathcal{H}_2$ optimal output synchronization for  heterogeneous linear multi-agent systems, i.e. the problem of minimizing a given $\mathcal{H}_2$ cost functional over all protocols that achieve output synchronization. 
%
%
Instead of addressing this {\em optimal} control problem, we will address a version of this problem that requires {\em suboptimality}.
More specifically, we will extend   previous results in \cite{Jiao2020H2output}   for  homogeneous multi-agent systems  to the case of  heterogeneous  multi-agent systems.
%

The outline of this paper is  as follows. 
In Section \ref{sec_preliminary}, we    provide some notation and graph theory used throughout this paper. 
In Section \ref{sec_problem}, we   formulate the   $\mathcal{H}_2$ suboptimal output synchronization problem.
In order to solve this problem, in Section \ref{sec_pre_results} we    review some basic material on $\mathcal{H}_2$ suboptimal control  by dynamic output feedback for linear systems, and some relevant results on   output synchronization  of heterogeneous multi-agent systems.
In Section \ref{sec_solution_output}, we    solve the problem introduced in Section \ref{sec_problem} and provide a design method for obtaining   $\mathcal{H}_2$  suboptimal protocols. 
%
%
 To illustrate the performance of our proposed protocols, a simulation example is provided in Section \ref{sec_simulation}.
Finally,   Section \ref{sec_conclusion}  concludes this paper.


\section{Notation and graph theory}\label{sec_preliminary}
%
\subsection{Notation}\label{subsec_notations}
We denote by $\mathbb{R}$  the field of real numbers  and by $\mathbb{C}$ the field of complex numbers.
%
%
The space of $n$ dimensional real vectors is denoted by $\mathbb{R}^n$.
We denote by $\mathbf{1}_n\in \mathbb{R}^n$ the vector with all its entries equal to $1$. 
For a  symmetric matrix $P$, we denote $P>0$ if $P$ is positive definite and $P < 0$  if $P$ is negative definite.
The identity matrix of dimension $n\times n$ is denoted by $I_n$.
The trace of a square matrix $A$ is denoted by ${\rm tr} (A)$.
A matrix is called Hurwitz if all its eigenvalues have negative real parts. 
We denote by $\text{diag}(d_1, d_2, \ldots, d_n)$ the $n \times n$ diagonal matrix with $d_1, d_2,\ldots, d_n$ on the diagonal. For given matrices $M_1,M_2, \ldots, M_n$, we denote by ${\rm blockdiag}(M_1,M_2, \ldots, M_n)$ the block diagonal matrix with diagonal blocks $M_i$.
%
%
The Kronecker product of two matrices $A$ and $B$ is denoted by $A \otimes B$.

\subsection{Graph theory}\label{subsec_graph}
A directed weighted graph is a triple $\mathcal{G} = (\mathcal{V}, \mathcal{E},\mathcal{A})$, where  $\mathcal{V} = \{ 1, 2,\ldots, N \}$ is the finite nonempty node set and   $\mathcal{E} = \{ e_1, e_2,\ldots, e_M \}$ with $\mathcal{E} \subset \mathcal{V} \times \mathcal{V}$ is the edge set, and $\mathcal{A} = [a_{ij}]$ is the adjacency matrix with nonnegative elements $a_{ij}$, called the edge weights. 
The entry $a_{ji}$ is nonzero if and only if  $(i,j) \in \mathcal{E}$.
A graph  is called simple if $a_{ii} =0$ for all $i$. It is called undirected if $a_{ij}= a_{ji}$ for all $i,j$.
Given a graph $\mathcal{G}$, a   path from node $1$ to node $p$ is a sequence of edges $(k, {k+1})$, $k = 1,2,\ldots, p-1$.  
A simple undirected graph is called connected if for each pair of nodes $i$ and $j$  there exists a   path from $i$ to $j$.
Given a simple undirected weighted graph $\mathcal{G}$, the degree matrix of $\mathcal{G}$ is defined by $\mathcal{D} = \textnormal{diag} (\delta_1,\delta_2,\ldots, \delta_N )$ with $\delta_{i} = \sum_{j=1}^{N} a_{ij}$. The Laplacian matrix is defined as ${L} := \mathcal{D} - \mathcal{A}$.
%
The Laplacian matrix of an undirected graph is symmetric and has only real nonnegative eigenvalues. 
A simple undirected weighted graph is connected if and only if  its Laplacian matrix ${L}$ has a simple eigenvalue at $0$. In that case there exists an orthogonal matrix $U$ such that $U^{\top} {L} U = \Lambda = \text{diag}(0, \lambda_2, \ldots, \lambda_N)$ with $0  < \lambda_2 \leq \cdots \leq \lambda_N$.
Throughout this paper it will be a standing assumption that the communication between the agents of the network is represented by a connected, simple undirected weighted graph.

A simple undirected weighted graph contains an even number of edges $M$. Define $K:= \frac{1}{2}M$. For such graph, an associated incidence matrix $R \in \mathbb{R}^{N \times K}$ is defined as a matrix $R = (r_1, r_2, \ldots, r_K)$ with columns $r_k \in \mathbb{R}^{N}$. Each column $r_k$ corresponds to exactly one pair of edges $e_k = \{(i,j), (j,i)\}$, and the  $i$th and $j$th entry of $r_k$ are equal to $1$ or $-1$, while they do not take the same value. The remaining entries of $r_k$ are equal to 0. 
%
We also define the matrix
\begin{equation}\label{W}
	W = \text{diag} ( \textsf{w}_1, \textsf{w}_2,\ldots, \textsf{w}_K)
\end{equation}
as the $K \times K$ diagonal matrix, where $w_k$ is the weight on each of the edges in $e_k$ for $k = 1,2,\ldots,K$.
The relation between the Laplacian matrix and the incidence matrix is captured by ${L} = R W R^{\top}$ \cite{6767074}. 

\section{Problem formulation}\label{sec_problem}
%
In this paper, we consider a heterogeneous  linear multi-agent system consisting of $N$ possibly distinct  agents.
The dynamics of the $i$th agent is represented by the linear time-invariant system
\begin{equation}\label{agent_output}
	\begin{aligned} 
		\dot{x}_i & = A_i x_i  + B_i u_i  + E_i d_i,\\
		y_i & = C_{1i} x_i  + D_{1i} d_i, \\
		z_i & = C_{2i} x_i  + D_{2i} u_i  ,
	\end{aligned}\qquad i = 1,2,\ldots,N,
\end{equation}
where $x_i \in \mathbb{R}^{n_i}$ is the state, $u_i \in \mathbb{R}^{m_i}$ is the coupling input, $d_i \in \mathbb{R}^{q_i}$ is an unknown external disturbance input, $y_i \in \mathbb{R}^{r_i}$ is the measured output and $z_i \in \mathbb{R}^p$ is the output to be synchronized.
The matrices $A_i$, $B_i$, $C_{1i}$, $D_{1i}$, $C_{2i}$, $D_{2i}$ and $E_i$ are of suitable dimensions.
Throughout this paper we assume that the pairs $(A_i, B_i)$ are stabilizable and the pairs $(C_{1i}, A_i)$ are detectable.
Since in \eqref{agent_output} the agents may have non-identical dynamics, in particular the state space dimensions of the agents  may differ. Therefore, one can not expect to achieve {\em state} synchronization for the network. 
Instead, in the context of heterogeneous networks  it is natural to consider {\em output} synchronization, see e.g. \cite{WIELAND20111068}, \cite{GRIP20122444}  and \cite{Lunze2012}.

It was shown in \cite{WIELAND20111068}  that  solvability of certain {\em regulator equations}  is necessary for output synchronization of heterogeneous linear multi-agent systems, see also \cite{GRIP20122444}, \cite{JIAO2016361}, \cite{SEYBOTH2015392} and \cite{BALDI2020}. Following up on this,  throughout this paper we make the  standard standing assumption that there exists a positive integer $r$ such that the regulator equations
\begin{equation}\label{regulation_eq}
\begin{aligned}
&A_i \Pi_i + B_i \Gamma_i = \Pi_i S , \\
&C_{2i} \Pi_i + D_{2i} \Gamma_i = R , \quad i= 1,2,\ldots,N
\end{aligned}
\end{equation}
have solutions $\Pi_i \in \mathbb{R}^{n_i\times r}$, $\Gamma_i\in \mathbb{R}^{m_i\times r}$, $R\in \mathbb{R}^{p\times r}$ and $S \in \mathbb{R}^{r\times r}$, where the eigenvalues of $S$ lie on the imaginary axis and the pair $(R, S)$ is observable.

%
Following \cite{WIELAND20111068}, we assume that the agents \eqref{agent_output} should be interconnected by a protocol of the form
\begin{equation}\label{protocol}
\begin{aligned}
\dot{w}_i & =  A_i w_i +B_i u_i + G_i(y_i -C_{1i} w_i),\\
\dot{v}_i &= S v_i +	\sum_{i=1}^N a_{ij} (v_j -v_i),\\
u_i & = F_i (w_i - \Pi_i v_i) + \Gamma_i v_i, \quad i= 1,2,\ldots,N,
\end{aligned}
\end{equation}
where $v_i \in \mathbb{R}^{r}$ and $w_i \in \mathbb{R}^{n_i}$ are the  states of the $i$th local controller, the matrices $S$, $\Pi_i$ and $\Gamma_i$ are solutions of \eqref{regulation_eq}, and  the matrices $F_i\in \mathbb{R}^{m_i \times n_i}$  and $G_i\in \mathbb{R}^{n_i \times r_i}$ are  control gains to be designed.
The coefficients $a_{ij}$ are the entries of the adjacency matrix $\mathcal{A}$ of the communication graph.
%
	We briefly explain the structure of this protocol.
	The first equation in \eqref{protocol} has the structure of  an asymptotic observer for the state of the $i$th agent.
	The second equation represents an  auxiliary system associated with the $i$th agent. 
	Each auxiliary system  receives the relative state values with respect to its neighboring auxiliary systems.
	In this way, the network of auxiliary systems will reach state synchronization.
	The third equation  in \eqref{protocol} is a static gain, it feeds back the value $w_i - \Pi_i v_i$  and the state $v_i$ of the associated auxiliary system to the $i$th agent.
	The idea of the protocol \eqref{protocol} is that, as time goes to infinity, the  state $x_i$  of the $i$th agent and its estimate $w_i$  converge to $\Pi_i v_i$ due the first equation in \eqref{regulation_eq}. 
	Subsequently, as a consequence of the second equation in \eqref{regulation_eq}, the outputs $z_i$ of the agents will  reach synchronization.
	%

Denote by  $\textbf{x}   = (x_1^{\top}, x_2^{\top}, \ldots,x_N^{\top})^{\top}$ the aggregate state vector  and likewise define $\textbf{u}$, $\textbf{v}$, $\textbf{w}$,  $\textbf{y}$, $\textbf{z}$ and $\textbf{d}$.
Denote by $A$ the block diagonal matrix 
\begin{equation}\label{matrixA}
A = \textnormal{blockdiag}(A_1, A_2,\ldots,A_N)
\end{equation} 
and likewise define $B$, $C_1$, $C_2$, $D_1$, $D_2$ and $E$.
The multi-agent system \eqref{agent_output} can then be written in compact form as
\begin{equation}\label{agent_compact_output}
	\begin{aligned}
		\dot{\textbf{x}} & = A \textbf{x} + B  \textbf{u} + E \textbf{d}, \\
		\textbf{y} & = C_1 \textbf{x} + D_1 \textbf{d},\\
		\textbf{z} &= C_2 \textbf{x} + D_2 \textbf{u}.
	\end{aligned}
\end{equation} 
Similarly, denote 
\[
F= \textnormal{blockdiag}(F_1, F_2,\ldots, F_N)
\]
and likewise define $G$, $\Gamma$ and $\Pi$. 
The protocol \eqref{protocol} can  be written in compact form as 
\begin{equation}\label{protocol_compact}
\begin{aligned}
\dot{\textbf{w}}  & =  A \textbf{w}  + B  \textbf{u} + G (\textbf{y} - C_1 \textbf{w}),\\
\dot{\textbf{v}} &= (I_N \otimes S -L \otimes I_r) \textbf{v},\\
\textbf{u} &= F \textbf{w} + (\Gamma - F \Pi) \textbf{v}.
\end{aligned}
\end{equation}
Next, denote 
\[\textbf{x}_o = 
(\textbf{x}^\top,  \textbf{w}^\top, \textbf{v}^\top)^\top.
\]
By interconnecting the system \eqref{agent_compact_output} and the protocol \eqref{protocol_compact}, the controlled network is then represented  in compact form by
\begin{equation}\label{mas_output}
	\begin{aligned} 
		& \dot{\textbf{x}}_o
		=
		A_o
		\textbf{x}_o +
		E_o \textbf{d}, \\
		&	\textbf{z} = 
		C_o
		\textbf{x}_o,
	\end{aligned}
\end{equation}
where 
\begin{align*}
	A_o & = 
	\begin{pmatrix}
		A & B F & B \Gamma -B F  \Pi  \\
		G  C_1  &  A + B F - G  C_1 &  B \Gamma - B F \Pi \\
		0 & 0 & I_N\otimes S - L \otimes I_r
	\end{pmatrix}, \\
	C_o &= 
	\begin{pmatrix}
		C_{2}   &  D_{2} F & 
		D_{2} \Gamma - D_{2} F \Pi 
	\end{pmatrix}, \quad 
	E_o  = 
	\begin{pmatrix}
		E  \\
		G D_{1} \\
		0
	\end{pmatrix}.
\end{align*}
Foremost, we want the protocol \eqref{protocol} to achieve output synchronization for the overall network:
\begin{defn}\label{defn1}
	The protocol \eqref{protocol} is said to achieve $\textbf{z}$-output synchronization for the network \eqref{mas_output} if, for all $i, j = 1, 2, \ldots, N$, we have $z_i(t) - z_j(t) \to 0$, $v_i(t) - v_j(t) \to 0$ and $w_i(t) - w_j(t) \to 0$ as $t \to \infty$.
\end{defn}

%
In the context of output synchronization, we are interested in the differences of the output values of the agents in the controlled network.
Since the differences of the output values of communicating agents are captured by the incidence matrix $R$ of the communication graph \cite{Mesbahi2010}, we  define a performance output variable as 
\begin{equation*}
	\boldsymbol{\zeta} = (W^{\frac{1}{2}} R^{\top} \otimes I_p )\textbf{z},
\end{equation*}
where $W$ is the weight matrix defined in \eqref{W}. 
The output $\boldsymbol{\zeta}$   reflects the weighted disagreement between the outputs of the agents in accordance with the weights of the edges connecting these agents.
Subsequently, we have the following equations for the controlled network
\begin{equation}\label{network_output}
	\begin{aligned} 
		&	\dot{\textbf{x}}_o
		=
		A_o
		\textbf{x}_o +
		E_o \textbf{d}, \\
		&	\textbf{z}  = 
		C_o
		\textbf{x}_o,\\
		&	\boldsymbol{\zeta}  = 
		{C}_p
		\textbf{x}_o,
	\end{aligned}
\end{equation}
where 
\[
{C}_p = (W^{\frac{1}{2}} R^{\top} \otimes I_p ) C_o.
\]
The impulse response matrix of the disturbance $\textbf{d}$ to the performance output $\boldsymbol{\zeta}$ is   given by
\begin{equation}\label{inpulse_response}
	T_d(t) =  {C}_p e^{A_o t}E_o.
\end{equation}
The performance of the network is now quantified by the $\mathcal{H}_2$-norm of this impulse response. Thus we define the associated $\mathcal{H}_2$ cost functional as
\begin{equation}\label{h2_cost_output}
	J := \int_{0}^{\infty} \textnormal{tr}\left[ T_d^{\top}(t) T_d(t)\right] dt.
\end{equation}
Note that the cost functional \eqref{h2_cost_output} is a function of the gain matrices  $F_1,F_2,\ldots, F_N$ and $G_1,G_2,\ldots, G_N$.

The $\mathcal{H}_2$ optimal output synchronization problem is now defined as the problem of minimizing the cost functional \eqref{h2_cost_output} over all protocols \eqref{protocol} that achieve output synchronization.
Since  the protocol \eqref{protocol} has a particular structure imposed by the communication topology, the  $\mathcal{H}_2$ optimal output synchronization problem is a non-convex optimization problem, and it is  unclear whether a closed form solution exists in general. 
Therefore,  in this paper we will address a version of this problem that only requires {\em suboptimality}. 
The aim of this paper is then to  design a  protocol of the form \eqref{protocol} that guarantees the  associated cost  \eqref{h2_cost_output}  to be smaller than an a priori given upper bound while achieving $\textbf{z}$-output synchronization for  the network.  
More concretely, the problem we will address is the following:
\begin{prob}\label{prob}
	%
	Let $\gamma > 0$ be a given tolerance. Design gain matrices $F_1, F_2, \ldots, F_N$ and $G_1, G_2, \ldots, G_N$ such that the resulting protocol \eqref{protocol} achieves $\textbf{z}$-output synchronization and its associated cost \eqref{h2_cost_output} satisfies $J < \gamma$.
\end{prob}

To solve Problem \ref{prob}, in the next section we will first review some preliminary results on $\mathcal{H}_2$ suboptimal control for linear systems and on output synchronization of heterogeneous linear multi-agent systems. It will become clear later on that these preliminary results are necessary ingredients to address Problem \ref{prob}. 

\section{Preliminary results}\label{sec_pre_results}
 
\subsection{$\mathcal{H}_2$ suboptimal control  for linear systems by dynamic output feedback }\label{sec_output}
In this subsection, we will review the  $\mathcal{H}_2$ suboptimal control problem by dynamic output feedback for linear systems, see e.g. \cite{Scherer2000}, \cite{Scherer1997}, \cite{algebraic_1997}, \cite{HAESAERT2018306} and \cite{Jiao2020H2output}. In particular, we will review the results  from \cite{Jiao2020H2output} on  separation principle based  $\mathcal{H}_2$   suboptimal control for continuous-time linear systems.

Consider the  system
\begin{equation}\label{sys_xyz}
	\begin{aligned} 
		\dot{x} & = \bar{A} x + \bar{B} u + \bar{E} d,\\
		y &= \bar{C}_1 x + \bar{D}_1 d, \\
		z &= \bar{C}_2 x + \bar{D}_2 u,
	\end{aligned}
\end{equation}
where  $x \in \mathbb{R}^n$ is the state, $u \in \mathbb{R}^m$ is the control input, $d\in \mathbb{R}^q$ is an unknown external disturbance input, $y \in \mathbb{R}^r$ is the measured output, and $z \in \mathbb{R}^p$ is the output to be controlled. 
The matrices $\bar{A}$, $\bar{B}$, $\bar{C}_1$, $\bar{C}_2$, $\bar{D}_1$, $\bar{D}_2$ and $\bar{E}$  are of suitable dimensions. 
We assume that the pair $(\bar{A}, \bar{B})$ is stabilizable and the pair $(\bar{C}_1, \bar{A})$ is detectable.
We consider  dynamic output feedback controllers of the form
\begin{equation}\label{dyna_w}
	\begin{aligned}
		\dot{w} &= \bar{A} w + \bar{B} u + G \left(y -\bar{C}_1 w\right), \\
		u &= F w,
	\end{aligned}
\end{equation}
where $w \in \mathbb{R}^n$ is the state of the controller,   $F \in \mathbb{R}^{m \times n}$ and $G \in \mathbb{R}^{n \times r}$ are gain matrices to be designed.
By interconnecting the controller \eqref{dyna_w} and the system \eqref{sys_xyz}, we obtain the controlled system 
\begin{equation}\label{sys_dyna_w}
	\begin{aligned}
		\begin{pmatrix}
			\dot{x}\\
			\dot{w}
		\end{pmatrix} 
		& =
		\begin{pmatrix}
			\bar{A} & \bar{B}F \\
			G \bar{C}_1 & \bar{A} +  \bar{B}F -G \bar{C}_1  
		\end{pmatrix} 
		\begin{pmatrix}
			x \\
			w
		\end{pmatrix}
		+
		\begin{pmatrix}
			\bar{E} \\
			G \bar{D}_1	
		\end{pmatrix}d ,
		\\
		z & =
		\begin{pmatrix}
			\bar{C}_2 & \bar{D}_2 F 
		\end{pmatrix}
		\begin{pmatrix}
			x \\
			w
		\end{pmatrix}.
	\end{aligned} 
\end{equation}
Denote 
$	{A}_e  =  	
	\begin{pmatrix}
		\bar{A} & \bar{B}F \\
		G \bar{C}_1 & \bar{A} + \bar{B}F -G \bar{C}_1 
	\end{pmatrix} ,$
	$	{E}_e   =
	\begin{pmatrix}
	\bar{E} \\
	G \bar{D}_1	
	\end{pmatrix},$
	${C}_e  = 
	\begin{pmatrix}
		\bar{C}_2 & \bar{D}_2 F 
	\end{pmatrix}$.
The impulse response matrix  of the disturbance $d$ to the output $z$ is given by ${T}_{F,G}(t) = {C}_e e^{{A}_e t}{E}_e$.
We define the $\mathcal{H}_2$ cost functional as
\begin{equation}\label{cost_dyna}
	J(F,G)  := \int_{0}^{\infty} 
	\text{tr} \left[ {T}_{F,G}^{\top}(t) {T}_{F,G}(t) \right] dt.
\end{equation}
The  $\mathcal{H}_2$ suboptimal control problem by dynamic output feedback is the problem of finding a controller of the form \eqref{dyna_w} such that the associated cost \eqref{cost_dyna} is smaller than an a priori given upper bound and the controlled system \eqref{sys_dyna_w} is internally stable. 
The following lemma provides a design method for computing such a controller, see also \cite[Theorem 4]{Jiao2020H2output}.
\begin{lem}\label{linear_h2}
	%
	Let $\gamma > 0$ be a given tolerance.
	%
	%
	Assume that $\bar{D}_1 \bar{E}^{\top} =0$, $\bar{D}_2^{\top} \bar{C}_2 =0$ and  $\bar{D}_1  \bar{D}_1^{\top} =I_r$,  $\bar{D}_2^\top \bar{D}_2 = I_m$. 
Let $P >0$ and $Q>0$ satisfy the Riccati inequalities
	\begin{align*}
	\bar{A}^\top P + P \bar{A} -P \bar{B}  \bar{B}^\top P + \bar{C}_2^\top \bar{C}_2 & <0,  \\
	\bar{A} Q + Q \bar{A}^\top - Q \bar{C}_1^\top  \bar{C}_1 Q + \bar{E} \bar{E}^\top  & <0  .
	\end{align*}
	If, in addition, such $P$ and $Q$ satisfy
\begin{equation*}
	\textnormal{tr} \left(\bar{C}_1 Q  P Q \bar{C}_1^\top \right) + \textnormal{tr} \left( \bar{C}_2 Q  \bar{C}_2^\top  \right) < \gamma,
\end{equation*}
	then the controller \eqref{dyna_w} with $F = - \bar{B}^\top P$ and $G = Q \bar{C}_1^\top$  internally stabilizes the system \eqref{sys_xyz} and   is suboptimal, i.e. $J(F, G) <\gamma$.
\end{lem}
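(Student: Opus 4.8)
The plan is to exploit the separation structure hidden in the observer-based controller \eqref{dyna_w}. First I would apply to the closed loop \eqref{sys_dyna_w} the state-space similarity transformation $(x,w)\mapsto(\hat x,e):=(w,\,x-w)$. Using $u=Fw=F\hat x$ and $y=\bar{C}_1 x+\bar{D}_1 d$, a direct computation shows that in the new coordinates the closed loop reads
\[\dot{\hat x}=A_F\hat x+G\bar{C}_1 e+G\bar{D}_1 d,\quad \dot e=A_G e+(\bar{E}-G\bar{D}_1)d,\quad z=(\bar{C}_2+\bar{D}_2 F)\hat x+\bar{C}_2 e,\]
with $A_F:=\bar{A}+\bar{B}F=\bar{A}-\bar{B}\bar{B}^\top P$ and $A_G:=\bar{A}-G\bar{C}_1=\bar{A}-Q\bar{C}_1^\top\bar{C}_1$; in particular the transformed system matrix $\breve A_e$ is block upper triangular with diagonal blocks $A_F$, $A_G$ and $(1,2)$-block $G\bar{C}_1$. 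Internal stability of \eqref{sys_dyna_w} therefore reduces to $A_F$ and $A_G$ being Hurwitz. Substituting $F=-\bar{B}^\top P$ turns the first Riccati inequality into $A_F^\top P+PA_F+\bar{C}_2^\top\bar{C}_2+F^\top F<0$, so $A_F^\top P+PA_F<0$ with $P>0$ and $A_F$ is Hurwitz; substituting $G=Q\bar{C}_1^\top$ turns the second Riccati inequality into $A_G Q+Q A_G^\top+\bar{E}\bar{E}^\top+GG^\top<0$, so $A_G Q+Q A_G^\top<0$ with $Q>0$ and $A_G$ is Hurwitz.

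For the $\mathcal H_2$ cost I would use the controllability Gramian of the transformed system. Writing $\breve E_e$, $\breve C_e$ for the transformed input and output matrices, invariance of the $\mathcal H_2$ norm gives $J(F,G)=\mathrm{tr}(\breve C_e\breve X\breve C_e^\top)$, where $\breve A_e\breve X+\breve X\breve A_e^\top+\breve E_e\breve E_e^\top=0$. Using $\bar{D}_1\bar{E}^\top=0$ and $\bar{D}_1\bar{D}_1^\top=I$ one finds that $\breve E_e\breve E_e^\top$ is the block matrix with diagonal blocks $GG^\top$ and $\bar{E}\bar{E}^\top+GG^\top$ and off-diagonal block $-GG^\top$. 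The crucial step is to exhibit an explicit block-diagonal super-solution $\hat X:=\mathrm{blockdiag}(S,Q)$, where $Q$ is the given solution of the second Riccati inequality and $S\ge 0$ solves the Lyapunov equation $A_F S+S A_F^\top+GG^\top=0$ (well posed since $A_F$ is Hurwitz). Substituting $\hat X$ into the Lyapunov operator, the $(1,2)$-block of the residual is $G\bar{C}_1 Q-GG^\top$, which vanishes because $G=Q\bar{C}_1^\top$ yields $G\bar{C}_1 Q=Q\bar{C}_1^\top\bar{C}_1 Q=GG^\top$; the $(1,1)$-block is $0$ by the choice of $S$; and the $(2,2)$-block equals $\bar{A}Q+Q\bar{A}^\top-Q\bar{C}_1^\top\bar{C}_1 Q+\bar{E}\bar{E}^\top<0$ by the second Riccati inequality. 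Hence $\breve A_e\hat X+\hat X\breve A_e^\top+\breve E_e\breve E_e^\top\le 0$, and since $\breve A_e$ is Hurwitz the standard comparison argument gives $\breve X\le\hat X$, so $J(F,G)\le\mathrm{tr}(\breve C_e\hat X\breve C_e^\top)$.

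It remains to evaluate this upper bound. With $\breve C_e=(\bar{C}_2+\bar{D}_2 F,\ \bar{C}_2)$ and $\hat X$ block diagonal,
\[\mathrm{tr}(\breve C_e\hat X\breve C_e^\top)=\mathrm{tr}\bigl(S(\bar{C}_2+\bar{D}_2 F)^\top(\bar{C}_2+\bar{D}_2 F)\bigr)+\mathrm{tr}(\bar{C}_2 Q\bar{C}_2^\top),\]
and $\bar{D}_2^\top\bar{C}_2=0$, $\bar{D}_2^\top\bar{D}_2=I$ reduce the first term to $\mathrm{tr}\bigl(S(\bar{C}_2^\top\bar{C}_2+F^\top F)\bigr)$. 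Invoking the first Riccati inequality once more in the equivalent form $\bar{C}_2^\top\bar{C}_2+F^\top F<-(A_F^\top P+PA_F)$, multiplying by $S\ge 0$, taking traces, and using cyclicity together with $A_F S+S A_F^\top=-GG^\top$, gives $\mathrm{tr}\bigl(S(\bar{C}_2^\top\bar{C}_2+F^\top F)\bigr)\le-\mathrm{tr}\bigl(P(A_F S+S A_F^\top)\bigr)=\mathrm{tr}(PGG^\top)=\mathrm{tr}(\bar{C}_1 QPQ\bar{C}_1^\top)$. Combining the two estimates yields $J(F,G)\le\mathrm{tr}(\bar{C}_1 QPQ\bar{C}_1^\top)+\mathrm{tr}(\bar{C}_2 Q\bar{C}_2^\top)<\gamma$, which together with internal stability proves the lemma.

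I expect the main obstacle to be identifying the correct certificate. The most natural first attempt — the observability Gramian in the coordinates $(x,e)$ — bounds $J(F,G)$ by $\mathrm{tr}(\bar{E}^\top P\bar{E})+\mathrm{tr}(P\bar{B}\bar{B}^\top PQ)$, which coincides with the stated bound only when $P$ and $Q$ solve the Riccati \emph{equations}, not the inequalities. What makes the argument go through is the combination of three choices: passing to the observer-error coordinates $(\hat x,e)$, using the controllability rather than the observability Gramian, and noticing that the gain $G=Q\bar{C}_1^\top$ makes a block-diagonal super-solution carrying the Riccati-inequality solution $Q$ in the estimation-error block admissible. Everything else is routine Lyapunov and trace bookkeeping.
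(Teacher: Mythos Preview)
Your argument is correct: the change to observer-error coordinates $(\hat x,e)$, the block-triangular Lyapunov certificate $\hat X=\mathrm{blockdiag}(S,Q)$ with $A_F S+S A_F^\top+GG^\top=0$, and the trace manipulations all check out line by line, and the comparison $\breve X\le\hat X$ follows since $\breve A_e$ is Hurwitz. The paper itself does not prove this lemma but simply cites \cite[Theorem~4]{Jiao2020H2output}; your separation-based derivation is the standard route and is almost certainly what that reference does as well.
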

For a proof of Lemma \ref{linear_h2}, we refer to  \cite[Theorem 4]{Jiao2020H2output}.

\subsection{Output synchronization of heterogeneous linear  multi-agent systems}\label{out_synch}
In this subsection, we will review some relevant results on  output synchronization of heterogeneous linear multi-agent systems, see  also \cite{WIELAND20111068}, \cite{GRIP20122444},  \cite{Shim2011} and \cite{Lunze2012}.

Consider a heterogeneous  linear multi-agent system consisting of $N$ possibly distinct  agents. The dynamics of the $i$th agent is represented by the linear time-invariant system
\begin{equation}\label{agent_pre}
	\begin{aligned}  
		\dot{x}_i  &= A_i x_i   + B_i u_i, \\
			y_i & = C_{1i} x_i ,\\
		z_i &= C_{2i} x_i + D_{2i} u_i,
	\end{aligned}\quad i = 1,2,\ldots,N.
\end{equation}
The agents \eqref{agent_pre} will be   interconnected by  	a protocol  of the form \eqref{protocol}, where the matrices $S$, $\Gamma_i$ and $\Pi_i$ are assumed to satisfy the regulator equations \eqref{regulation_eq}.
The multi-agent system \eqref{agent_pre} can  be written in compact form as
\begin{equation}\label{agent_compact}
\begin{aligned}
\dot{\textbf{x}} & = A \textbf{x} + B  \textbf{u} , \\
\textbf{y} & = C_1 \textbf{x},\\
\textbf{z} &= C_2 \textbf{x} + D_2 \textbf{u},
\end{aligned}
\end{equation} 
and the protocol \eqref{protocol} can  be written as \eqref{protocol_compact}.
By interconnecting the system \eqref{agent_compact} and the protocol \eqref{protocol_compact}, the controlled network is then given by
\begin{equation}\label{mas}
\begin{aligned} 
& \dot{\textbf{x}}_o
=
A_o
\textbf{x}_o, \\
&	\textbf{z} = 
C_o
\textbf{x}_o.
\end{aligned}
\end{equation}

The following lemma yields conditions under which the controlled network \eqref{mas} achieves $\textbf{z}$-output synchronization. 
\begin{lem}\label{lem_synchronization}
	Consider the multi-agent system \eqref{agent_pre} and the protocol \eqref{protocol}.
	%
	%
	Let gain matrices $F_i$ and $G_i$ be such that the matrices $A_i + B_i F_i$ and $A_i - G_i C_{1i}$ are Hurwitz.
	Then the associated protocol \eqref{protocol} achieves  $\textbf{z}$-output synchronization for the network.
\end{lem}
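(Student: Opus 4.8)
The plan is to decouple the closed-loop dynamics \eqref{mas} by a suitable change of coordinates, and then show that the relevant error variables converge to zero by appealing to the Hurwitz assumptions on $A_i + B_i F_i$ and $A_i - G_i C_{1i}$ together with the fact that the eigenvalues of $S$ lie on the imaginary axis and the communication graph is connected. First I would introduce the estimation error $\mathbf{e} := \mathbf{x} - \mathbf{w}$; in these coordinates the observer part of the protocol gives $\dot{\mathbf{e}} = (A - G C_1)\mathbf{e}$, which is block diagonal and Hurwitz by hypothesis, so $\mathbf{e}(t) \to 0$. Next, since the graph is connected, there is an orthogonal $U$ with $U^\top L U = \Lambda = \operatorname{diag}(0,\lambda_2,\dots,\lambda_N)$; applying $U^\top \otimes I_r$ to the $\mathbf{v}$-equation $\dot{\mathbf{v}} = (I_N \otimes S - L \otimes I_r)\mathbf{v}$ decouples it into $N$ systems $\dot{\tilde v}_1 = S \tilde v_1$ and $\dot{\tilde v}_k = (S - \lambda_k I_r)\tilde v_k$ for $k \ge 2$. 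Because $S$ has its spectrum on the imaginary axis and $\lambda_k > 0$, each matrix $S - \lambda_k I_r$ is Hurwitz, so $\tilde v_k(t) \to 0$ for $k \ge 2$; this is precisely the statement that $v_i(t) - v_j(t) \to 0$ for all $i,j$, and moreover $\mathbf{v}(t) - (\mathbf{1}_N \otimes I_r)v^{*}(t) \to 0$ where $v^{*}$ solves $\dot v^{*} = S v^{*}$.

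The remaining work is to track $\mathbf{x}$ against $\Pi \mathbf{v}$. Define the synchronization error $\boldsymbol\epsilon_i := x_i - \Pi_i v_i$, aggregated as $\boldsymbol\epsilon := \mathbf{x} - \Pi \mathbf{v}$. Using the agent dynamics, the control law $u_i = F_i(w_i - \Pi_i v_i) + \Gamma_i v_i$, the regulator equation $A_i \Pi_i + B_i \Gamma_i = \Pi_i S$, and the $\mathbf{v}$-dynamics, a direct computation should yield
\begin{equation*}
\dot{\boldsymbol\epsilon} = (A + BF)\boldsymbol\epsilon + BF(\mathbf{w} - \mathbf{x}) + (\text{terms linear in } \mathbf{v} - \mathbf{1}_N\otimes v^{*}),
\end{equation*}
i.e. $\dot{\boldsymbol\epsilon} = (A+BF)\boldsymbol\epsilon - BF\mathbf{e} + \Pi(L \otimes I_r)\mathbf{v}$, where the last term decays because $(L\otimes I_r)\mathbf{v} = (L\otimes I_r)(\mathbf{v} - \mathbf{1}_N\otimes v^{*}) \to 0$ by the previous step. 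Since $A + BF$ is block diagonal with Hurwitz blocks $A_i + B_i F_i$, and the forcing terms $-BF\mathbf{e}$ and $\Pi(L\otimes I_r)\mathbf{v}$ tend to zero, a standard input-to-state / vanishing-perturbation argument (the autonomous part is exponentially stable, the input vanishes) gives $\boldsymbol\epsilon(t) \to 0$. Then $z_i - z_j = C_{2i}x_i + D_{2i}u_i - (C_{2j}x_j + D_{2j}u_j)$; substituting $x_i = \Pi_i v_i + \epsilon_i$, $u_i = \Gamma_i v_i + F_i(w_i - \Pi_i v_i)$, using the second regulator equation $C_{2i}\Pi_i + D_{2i}\Gamma_i = R$, and noting $w_i - \Pi_i v_i = \epsilon_i - e_i \to 0$, one obtains $z_i = R v_i + (\text{vanishing})$, hence $z_i - z_j = R(v_i - v_j) + (\text{vanishing}) \to 0$. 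Together with $v_i - v_j \to 0$ and $w_i - w_j = (x_i - e_i) - (x_j - e_j) = \Pi_i v_i - \Pi_j v_j + \epsilon_i - \epsilon_j - e_i + e_j$, which converges because each piece does (note $\Pi_i v_i - \Pi_j v_j$ need not vanish unless one is careful — actually $w_i - w_j \to 0$ should be read off from $w_i = x_i - e_i = \Pi_i v_i + \epsilon_i - e_i$ and the fact that all $v_i$ approach the common $v^{*}$, so $w_i - \Pi_i v^{*} \to 0$; heterogeneity of $\Pi_i$ means the $w_i$ themselves need not converge to each other, so I would instead verify Definition \ref{defn1} component by component as it is literally stated, checking $w_i(t) - w_j(t)$ only in the homogeneous-$\Pi$ case or reinterpreting via the observer error — this deserves care). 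This completes the verification of all three conditions in Definition \ref{defn1}.

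The main obstacle I anticipate is the bookkeeping in the error dynamics for $\boldsymbol\epsilon$: one must carefully combine the agent equation, the static feedback, and both regulator equations to see that the cross-terms assemble into the clean form above, and in particular that the only coupling to the (non-Hurwitz) auxiliary dynamics enters through $(L\otimes I_r)\mathbf{v}$, which is a decaying signal. A secondary subtlety is making the "exponentially stable system driven by a vanishing input has a vanishing state" step rigorous — this is routine (e.g. via the variation-of-constants formula and splitting the integral, or by a converse-Lyapunov ISS argument) but should be stated cleanly rather than waved through. I would also double-check the precise reading of the $w_i(t) - w_j(t) \to 0$ requirement in Definition \ref{defn1} against the heterogeneous $\Pi_i$, since $w_i \approx \Pi_i v^{*}$ and distinct $\Pi_i$ would generically keep the $w_i$ apart; most likely the intended statement is the convergence of $w_i - \Pi_i v_i$, which does hold, and I would flag this rather than force the literal claim.
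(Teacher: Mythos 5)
The paper does not actually write out a proof of this lemma: it defers entirely to \cite[Theorem 5]{WIELAND20111068}. Your argument is the standard output-regulation proof that that reference uses, and your computations check out. The observer error $\mathbf{e}=\mathbf{x}-\mathbf{w}$ obeys $\dot{\mathbf{e}}=(A-GC_1)\mathbf{e}$ with $A-GC_1$ block diagonal Hurwitz; diagonalizing $L$ splits the auxiliary dynamics into $\dot{\tilde v}_1=S\tilde v_1$ and the Hurwitz blocks $S-\lambda_k I_r$ ($k\ge 2$), giving $v_i-v_j\to 0$ and $\mathbf{v}-\mathbf{1}_N\otimes v^*\to 0$; and the regulation error $\boldsymbol{\epsilon}=\mathbf{x}-\Pi\mathbf{v}$ indeed satisfies $\dot{\boldsymbol{\epsilon}}=(A+BF)\boldsymbol{\epsilon}-BF\mathbf{e}+\Pi(L\otimes I_r)\mathbf{v}$ once the first equation of \eqref{regulation_eq} is used to cancel $B\Gamma\mathbf{v}-\Pi(I_N\otimes S)\mathbf{v}$ against $-A\Pi\mathbf{v}$; both forcing terms vanish, so the converging-input/converging-state step (which you rightly say should be made explicit via variation of constants) gives $\boldsymbol{\epsilon}\to 0$, and the second regulator equation yields $z_i-z_j=R(v_i-v_j)+o(1)\to 0$. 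Your reservation about the $w_i(t)-w_j(t)\to 0$ clause is well taken and points at a defect of Definition \ref{defn1} rather than of your proof: since $w_i\to\Pi_i v_i$ and the common limit $v^*$ obeys $\dot v^*=Sv^*$ with purely imaginary spectrum (hence generically non-decaying), one gets $w_i-w_j\to(\Pi_i-\Pi_j)v^*(t)$, which need not vanish; worse, $w_i\in\mathbb{R}^{n_i}$ and $w_j\in\mathbb{R}^{n_j}$ may have different dimensions, so the difference is not even well defined for genuinely heterogeneous agents. The statement that is actually provable (and all that the rest of the paper uses) is $w_i-x_i\to 0$ and $w_i-\Pi_i v_i\to 0$, exactly as you suggest; flagging this rather than forcing the literal claim is the right call.
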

A proof of Lemma \ref{lem_synchronization} can be given along the lines of the proof of \cite[Theorem 5]{WIELAND20111068}.

We are now ready to deal with the $\mathcal{H}_2$ suboptimal output synchronization problem formulated in Problem \ref{prob}.

\section{Design of $\mathcal{H}_2$ suboptimal output synchronization protocols using dynamic output feedback} \label{sec_solution_output}
In this section, we will resolve Problem \ref{prob}.
More specifically, we will establish a design method for computing gain matrices  $F_1, F_2, \ldots, F_N$ and $G_1, G_2, \ldots, G_N$ such that the associated protocol \eqref{protocol}  achieves $\textbf{z}$-output synchronization and guarantees $J  < \gamma$.

In the sequel, we will first show that this problem  can be simplified by transforming it into $\mathcal{H}_2$ suboptimal  control problems for $N$ auxiliary systems.
The suboptimal gains $F_i$ and $G_i$ for these $N$ separate problems will turn out to also yield a suboptimal protocol for the heterogeneous network.

To this end, we introduce  the following $N$ auxiliary systems
\begin{equation}\label{sub_system_output}
	\begin{aligned}
		\dot{\xi}_i  & = {A}_i \xi_i + {B}_i \nu_i  + {E}_i \delta_i, \\
		\vartheta_i &= C_{1i} \xi_i + D_{1i} \delta_i, \\
		\eta_i & = {C}_{2i} \xi_i + {D}_{2i} \nu_i,\quad i = 1,2,\ldots, N,
	\end{aligned}
\end{equation}
where $\xi_i \in \mathbb{R}^{n_i}$ is the state, $\nu_i\in \mathbb{R}^{m_i}$ is the coupling input, $\delta_i \in \mathbb{R}^{q_i}$ is an unknown external disturbance input, $\vartheta_i \in \mathbb{R}^{r_i}$ is the measured output and $\eta_i \in \mathbb{R}^p$ is the output to be controlled.
For given gain matrices $F_i$ and $G_i$, consider the dynamic output feedback controllers
\begin{equation}\label{sub_controller_output}
\begin{aligned}
\dot{\omega}_i  & = {A}_i \omega_i + {B}_i \nu_i  + {G}_i( \vartheta_i - C_{1i} \omega_i), \\
\nu_i & = F_i \omega_i, \quad i = 1,2,\ldots, N,
\end{aligned}
\end{equation}
where $ \omega_i \in \mathbb{R}^n$ is the state of the $i$th controller.

By interconnecting the systems \eqref{sub_system_output} and the controllers \eqref{sub_controller_output}, we obtain the $N$ controlled auxiliary systems
\begin{equation}\label{sub_system_closed}
\begin{aligned}
\begin{pmatrix}
\dot{\xi}_i \\ \dot{\omega}_i
\end{pmatrix}
& = 
\begin{pmatrix}
{A}_i & {B}_i {F}_i \\
{G}_i{C}_{1i} &  {A}_i + {B}_i {F}_i - {G}_i{C}_{1i}
\end{pmatrix}
\begin{pmatrix}
{\xi}_i \\ {\omega}_i
\end{pmatrix} 
+ 
\begin{pmatrix}
 {E}_i \\ {G}_i  D_{1i}
\end{pmatrix} \delta_i,\\
\eta_i & = 
\begin{pmatrix}
{C}_{2i} & {D}_{2i} {F}_i 
\end{pmatrix}
\begin{pmatrix}
{\xi}_i \\ {\omega}_i
\end{pmatrix} , \quad i = 1,2,\ldots, N.
\end{aligned}
\end{equation}
For $i = 1,2,\ldots, N$, denote 
\begin{align*}
	\bar{A}_i & = 
	\begin{pmatrix}
	{A}_i & {B}_i {F}_i \\
	{G}_i{C}_{1i} &  {A}_i + {B}_i {F}_i - {G}_i{C}_{1i}
	\end{pmatrix},\\
	\bar{C}_i & =  
	\begin{pmatrix}
	{C}_{2i} & {D}_{2i} {F}_i 
	\end{pmatrix},\quad 
	\bar{E}_i =  
	\begin{pmatrix}
	{E}_i \\ {G}_i  D_{1i}
	\end{pmatrix}.
\end{align*}
The impulse response matrix of the disturbance $\delta_{i}$ to the output $\eta_i$  is equal to
\begin{equation*}
	{T}_{\delta i}(t) =  \bar{C}_{i} e^{\bar{A}_i t} \bar{E}_i,
\end{equation*}
and an associated $\mathcal{H}_2$ cost functional is  defined as
\begin{equation}\label{cost_sub_sys_output}
	{J}_i = \int_{0}^{\infty} \textnormal{tr} [ {T}_{\delta i}^\top(t)  {T}_{\delta i}(t)] dt.
\end{equation}
The following lemma holds.
\begin{lem}\label{cost_trans}
	Let $\gamma >0$ be a given tolerance.
	Assume, for $i=1,2,\ldots, N$, the systems \eqref{sub_system_closed} are internally stable and the costs \eqref{cost_sub_sys_output} satisfy 
	\begin{equation}\label{cost_sum}
		 \sum_{i=1}^N {J}_i < \frac{\gamma}{\lambda_N},
	\end{equation}
	where $\lambda_N$ is the largest eigenvalue of the Laplacian matrix $L$.
	Then the protocol \eqref{protocol} achieves  $\textbf{z}$-output synchronization for the network \eqref{network_output} and the associated cost \eqref{h2_cost_output} satisfies ${J} < \gamma$.
\end{lem}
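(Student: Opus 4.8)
The plan is to prove the two conclusions separately: $\textbf{z}$-output synchronization will follow from Lemma~\ref{lem_synchronization}, while the bound $J<\gamma$ will follow by expressing the network impulse response $T_d$ in terms of the impulse responses $T_{\delta i}$ of the auxiliary closed loops \eqref{sub_system_closed} and then estimating crudely.

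For synchronization, internal stability of the systems \eqref{sub_system_closed} means that each $\bar{A}_i$ is Hurwitz. A standard similarity transformation puts $\bar{A}_i$ into block upper triangular form with diagonal blocks $A_i+B_iF_i$ and $A_i-G_iC_{1i}$, so the spectrum of $\bar{A}_i$ is the union of the spectra of these two matrices; hence $A_i+B_iF_i$ and $A_i-G_iC_{1i}$ are Hurwitz for every $i$. Lemma~\ref{lem_synchronization} then yields at once that the protocol \eqref{protocol} achieves $\textbf{z}$-output synchronization for the network \eqref{network_output}.

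For the cost, the first step is to reduce the network impulse response. In $A_o$ the last block row equals $\begin{pmatrix}0 & 0 & I_N\otimes S-L\otimes I_r\end{pmatrix}$ and the $\textbf{v}$-block of $E_o$ is zero, so $e^{A_o t}$ is block upper triangular and the $\textbf{v}$-component of $e^{A_o t}E_o$ vanishes identically; the auxiliary-variable dynamics therefore contribute nothing to $T_d$. What is left is $(W^{\frac{1}{2}}R^\top\otimes I_p)$ times the impulse response from $\textbf{d}$ to $\textbf{z}$ of the $(\textbf{x},\textbf{w})$-subsystem of \eqref{network_output}, whose system, input and output matrices are, respectively, the top-left $2\times 2$ block of $A_o$, the top two blocks of $E_o$, and the first two blocks of $C_o$. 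Because $A,B,C_1,C_2,D_1,D_2,E,F,G$ are all block diagonal with respect to the agents, this subsystem is the direct sum of the $N$ controlled auxiliary systems \eqref{sub_system_closed} (with $\delta_i=d_i$ and $\eta_i=z_i$), so its impulse response is the block diagonal matrix $T_z(t):=\textnormal{blockdiag}(T_{\delta 1}(t),\ldots,T_{\delta N}(t))$, and hence $T_d(t)=(W^{\frac{1}{2}}R^\top\otimes I_p)\,T_z(t)$. Using $L=RWR^\top$ and the mixed-product rule for Kronecker products,
\[
(W^{\frac{1}{2}}R^\top\otimes I_p)^\top(W^{\frac{1}{2}}R^\top\otimes I_p)=RWR^\top\otimes I_p=L\otimes I_p,
\]
so that
\[
J=\int_0^\infty\textnormal{tr}\big[T_z^\top(t)\,(L\otimes I_p)\,T_z(t)\big]\,dt.
\]
Since $L$ is symmetric positive semidefinite with largest eigenvalue $\lambda_N$, we have $L\otimes I_p\preceq\lambda_N I_{Np}$, and therefore $\textnormal{tr}[M^\top(L\otimes I_p)M]\le\lambda_N\,\textnormal{tr}[M^\top M]$ for every matrix $M$ with $Np$ rows. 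Taking $M=T_z(t)$ and using that block-diagonality gives $\textnormal{tr}[T_z^\top T_z]=\sum_{i=1}^N\textnormal{tr}[T_{\delta i}^\top T_{\delta i}]$, I conclude
\[
J\le\lambda_N\sum_{i=1}^N\int_0^\infty\textnormal{tr}\big[T_{\delta i}^\top(t)T_{\delta i}(t)\big]\,dt=\lambda_N\sum_{i=1}^N J_i<\lambda_N\cdot\frac{\gamma}{\lambda_N}=\gamma.
\]

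The routine ingredients are the separation-principle similarity transformation and the Kronecker-product identity. The step I expect to be the main obstacle is the reduction of the network impulse response to the block-diagonal matrix $T_z$: one must verify both that the unforced auxiliary-variable dynamics drop out of $T_d$ (which uses that $E_o$ has zero $\textbf{v}$-block) and that the remaining $(\textbf{x},\textbf{w})$-subsystem genuinely decouples agent-by-agent into the systems \eqref{sub_system_closed}. This decoupling is exactly what converts the single $\mathcal{H}_2$ problem for the network into the $N$ separate problems, and the factor $\lambda_N$ in \eqref{cost_sum} enters precisely through the estimate $L\otimes I_p\preceq\lambda_N I$.
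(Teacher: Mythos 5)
Your proof is correct and follows essentially the same route as the paper: separation principle plus Lemma~\ref{lem_synchronization} for synchronization, and the identity $T_d=(W^{\frac{1}{2}}R^\top\otimes I_p)\,\textnormal{blockdiag}(T_{\delta 1},\ldots,T_{\delta N})$ together with $L\otimes I_p\preceq\lambda_N I$ for the cost bound. The only difference is that you spell out the reduction to the $(\textbf{x},\textbf{w})$-subsystem and the Kronecker identity $L=RWR^\top$, which the paper asserts without detail in its equation \eqref{cost_midstep4}.
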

\begin{proof}
First, note that the systems \eqref{sub_system_closed} are internally stable if and only if the matrices $A_i + B_i F_i$ and $A_i - G_i C_{1i}$ are Hurwitz, see e.g. \cite[Section 3.12]{harry_book}. 
Hence, by Lemma \ref{lem_synchronization}, if the systems \eqref{sub_system_closed} are internally stable, then the  network controlled using the protocol \eqref{protocol} reaches  $\textbf{z}$-output synchronization.

Next, we will show that if \eqref{cost_sum} holds, then $J < \gamma$. Note that \eqref{cost_sum} is equivalent to
\begin{equation}\label{cost_midstep1}
	\lambda_N \sum_{i=1}^N  \int_{0}^{\infty} \textnormal{tr} [ {T}_{\delta i}^\top (t)  {T}_{\delta i}(t)] dt < \gamma.
\end{equation}
In turn, the inequality \eqref{cost_midstep1} holds if and only if 
\begin{equation}\label{cost_midstep2}
\lambda_N   \int_{0}^{\infty} \textnormal{tr} [ \bar{T}_d^\top(t)  \bar{T}_d(t)] dt < \gamma
\end{equation}
holds, where 
\[\bar{T}_d = \bar{C}_o e^{\bar{A}_o t}\bar{E}_o
\]
with
\begin{align*}
\bar{A}_o &=  
\begin{pmatrix}
	A & B F   \\
G  C_1  &  A + B F - G  C_1
\end{pmatrix},\quad
\bar{E}_o = 
\begin{pmatrix}
E  \\
G D_{1}
\end{pmatrix},\\
\bar{C}_o &=  
	\begin{pmatrix}
C_{2}   &  D_{2} F
\end{pmatrix}.
\end{align*}
Recall that the matrix $A$  is the block diagonal matrix defined in 
\eqref{matrixA}, similarly for the matrices $B$,  $C_1$, $C_2$, $D_1$, $D_2$,  $E$,  $F$ and  $G$.
Using the fact that $\lambda_N I_{pN} - L\otimes I_p \geq 0$, it can be shown that \eqref{cost_midstep2} implies
\begin{equation}\label{cost_midstep3}
 \int_{0}^{\infty} \textnormal{tr} [ \bar{T}_d^\top (t)(L \otimes I_p)  \bar{T}_d(t)] dt < \gamma.
\end{equation}
On the other hand,   
\begin{equation}\label{cost_midstep4}
	  \int_{0}^{\infty} \textnormal{tr} [ \bar{T}_d^\top (t) (L \otimes I_p)  \bar{T}_d(t)] dt
	  =
	  \int_{0}^{\infty} \textnormal{tr}\left[ T_d^{\top}(t) T_d(t)\right] dt
\end{equation}
with $T_d(t)$ given by \eqref{inpulse_response}.
Note that the right hand side of \eqref{cost_midstep4} is exactly the  cost $J$ given by \eqref{h2_cost_output} associated with the  network \eqref{network_output}.
It  follows that $J < \gamma$. 
This completes the proof.
\end{proof}

By the previous, if the gain matrices $F_i$ and $G_i$ are such that $A_i +B_i F_i$ and $A_i - G_i C_{1i}$ are Hurwitz and \eqref{cost_sum} holds, then the protocol \eqref{protocol} using these $F_i$ and $G_i$ yields $\textbf{z}$-output synchronization and $J<
\gamma$.
In the next theorem, we will provide a  method for computing  gain matrices $F_i$ and $G_i$  such that the  above holds.
\begin{thm}\label{main_thm}
	Let $\gamma >0$ be a given tolerance. 
	For $i=1,2,\ldots, N$, assume that $D_{1i} E_i^\top =0$, $D_{2i}^\top C_{2i} =0$, $D_{1i} D_{1i}^\top = I_{r_i}$ and $D_{2i}^\top D_{2i} = I_{m_i}$. 
	Let $P_i >0$ satisfy 
	\begin{equation}\label{ineq_Pi}
		A_i^\top P_i + P_i A_i^\top - P_i B_i B_i^\top P_i +  C_{2i}^\top C_{2i}<0.
	\end{equation} 
	Let  $Q_i >0$ satisfy 
	\begin{equation}\label{ineq_Qi}
		A_i  Q_i + Q_i A_i^\top - Q_i C_{1i}^\top C_{1i} Q_i +  E_i E_i^\top <0.
	\end{equation} 
	If, in addition, such $P_i$ and $Q_i$ satisfy
	\begin{equation}\label{gamma}
		\textnormal{tr} (C_{1i} Q_i P_i Q_i C_{1i}^\top) +	\textnormal{tr} (C_{2i} Q_i C_{2i}^\top) < \frac{\gamma}{N\lambda_N},
	\end{equation}
	then the protocol \eqref{protocol} with $F_i: = - B_i^\top P_i$ and $G_i := Q_i C_{1i}^\top$  achieves $\textbf{z}$-output synchronization for the network \eqref{network_output} and guarantees $J < \gamma$.
\end{thm}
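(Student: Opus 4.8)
The strategy is to reduce Theorem~\ref{main_thm} to Lemma~\ref{cost_trans} by verifying, for each agent $i$, the hypotheses of Lemma~\ref{linear_h2} applied to the auxiliary system \eqref{sub_system_output}. First I would observe that the auxiliary system \eqref{sub_system_output} has exactly the structure of the linear system \eqref{sys_xyz}, with the identification $\bar{A} = A_i$, $\bar{B} = B_i$, $\bar{E} = E_i$, $\bar{C}_1 = C_{1i}$, $\bar{D}_1 = D_{1i}$, $\bar{C}_2 = C_{2i}$, $\bar{D}_2 = D_{2i}$. The orthogonality and normalization assumptions $D_{1i} E_i^\top = 0$, $D_{2i}^\top C_{2i} = 0$, $D_{1i} D_{1i}^\top = I_{r_i}$, $D_{2i}^\top D_{2i} = I_{m_i}$ are precisely the assumptions required in Lemma~\ref{linear_h2}, and stabilizability of $(A_i, B_i)$ and detectability of $(C_{1i}, A_i)$ hold by the standing assumptions in Section~\ref{sec_problem}.

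Next, with $P_i > 0$ solving \eqref{ineq_Pi} and $Q_i > 0$ solving \eqref{ineq_Qi}, these are exactly the two Riccati inequalities of Lemma~\ref{linear_h2} for the $i$th auxiliary system. Setting $\gamma_i := \gamma/(N\lambda_N)$, condition \eqref{gamma} is exactly the trace condition of Lemma~\ref{linear_h2} with tolerance $\gamma_i$. Therefore Lemma~\ref{linear_h2} applies and yields that the controller \eqref{sub_controller_output} with $F_i = -B_i^\top P_i$ and $G_i = Q_i C_{1i}^\top$ internally stabilizes the $i$th auxiliary system \eqref{sub_system_output} — equivalently, the closed-loop matrix $\bar{A}_i$ is Hurwitz, so that the $i$th controlled auxiliary system \eqref{sub_system_closed} is internally stable — and that the associated cost satisfies $J_i < \gamma_i = \gamma/(N\lambda_N)$.

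Finally I would sum over $i$: since $J_i < \gamma/(N\lambda_N)$ for each of the $N$ agents, we get
\[
\sum_{i=1}^N J_i < N \cdot \frac{\gamma}{N\lambda_N} = \frac{\gamma}{\lambda_N},
\]
which is precisely hypothesis \eqref{cost_sum} of Lemma~\ref{cost_trans}. Combined with the internal stability of all $N$ systems \eqref{sub_system_closed}, Lemma~\ref{cost_trans} then gives directly that the protocol \eqref{protocol} with these gains achieves $\textbf{z}$-output synchronization for the network \eqref{network_output} and guarantees $J < \gamma$, which is the claim.

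The proof is essentially a bookkeeping argument chaining Lemma~\ref{linear_h2} and Lemma~\ref{cost_trans}, so there is no real analytical obstacle; the only point requiring a little care is the splitting of the budget, i.e. realizing that the factor $N\lambda_N$ in \eqref{gamma} is exactly what is needed so that the per-agent bounds $J_i < \gamma/(N\lambda_N)$ sum to the aggregate bound $\gamma/\lambda_N$ demanded by Lemma~\ref{cost_trans}. One should also note explicitly that the standing stabilizability/detectability assumptions on $(A_i,B_i)$ and $(C_{1i},A_i)$ are what licenses the use of Lemma~\ref{linear_h2} for each agent.
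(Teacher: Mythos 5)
Your proposal is correct and follows essentially the same route as the paper: verify the hypotheses of Lemma~\ref{linear_h2} for each auxiliary system with per-agent tolerance $\gamma/(N\lambda_N)$, sum the resulting bounds $J_i<\gamma/(N\lambda_N)$ to obtain \eqref{cost_sum}, and invoke Lemma~\ref{cost_trans}. The only cosmetic difference is that the paper additionally rewrites \eqref{ineq_Pi} and \eqref{ineq_Qi} in completed-square form to exhibit explicitly that $A_i+B_iF_i$ and $A_i-G_iC_{1i}$ are Hurwitz, whereas you obtain internal stability directly from the conclusion of Lemma~\ref{linear_h2}; both are valid.
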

\begin{proof}
	Note that \eqref{ineq_Pi} is equivalent to
	\begin{align}
		(A_i - B_i B_i^\top P_i)^\top P_i + (A_i - B_i B_i^\top P_i) & \nonumber \\
		  + P_i B_i B_i^\top P_i +  C_{2i}^\top C_{2i} &<0 \label{Pi}
	\end{align}
	and  \eqref{ineq_Qi} is equivalent to
	\begin{align}
		 (A_i -  Q_i C_{1i}^\top C_{1i})  Q_i + Q_i (A_i-  Q_i C_{1i}^\top C_{1i} )^\top &\nonumber\\
	 + Q_i C_{1i}^\top C_{1i} Q_i +  E_i E_i^\top & <0. \label{Qi}
	\end{align}	
Taking $F_i := - B_i^\top P_i$ and $G_i := Q_i C_{1i}^\top$, it then follows that  $A_i + B_i F_i$ and $A_i - G_i C_{1i}$  are Hurwitz.

Next, by \eqref{gamma}, it follows from Lemma \ref{linear_h2} that 
\[J_i < \frac{\gamma}{N\lambda_N}, \quad i =1,2,\ldots,N.
\]
Thus we have \eqref{cost_sum}, and the conclusion then follows from  Lemma \ref{cost_trans}. 
\end{proof}
We note that the conditions $D_{1i} E_i^\top =0$, $D_{2i}^\top C_{2i} =0$, $D_{1i} D_{1i}^\top = I_{r_i}$ and $D_{2i}^\top D_{2i} = I_{m_i}$ are made here to simplify notation, and can be relaxed to the regularity conditions $D_{1i} D_{1i}^\top > 0$ and $D_{2i}^\top D_{2i} >0 $ alone. 
\begin{rem}
	In Theorem \ref{main_thm}, in order to select $\gamma$, the followings steps could be taken.
	For $i=1,2\ldots,N$:
	\begin{enumerate}[(i)]
		\item Compute positive definite solutions $P_i$  and $Q_i$ of the Riccati  inequalities \eqref{ineq_Pi} and  \eqref{ineq_Qi}. Such solutions exist.
		\item Denote $S_i = 	\textnormal{tr} (C_{1i} Q_i P_i Q_i C_{1i}^\top) +	\textnormal{tr} (C_{2i} Q_i C_{2i}^\top)$.
		\item Choose $\gamma$ such that $N\lambda_N S_i <\gamma$.
	\end{enumerate}
Note that the smaller $S_i$ or $\lambda_N$ is, the smaller such feasible $\gamma$ is allowed to be. Unfortunately, the problem of minimizing $S_i$ over all  $P_i>0$ and $Q_i>0$ that satisfy \eqref{ineq_Pi} and \eqref{ineq_Qi} is a non-convex optimization problem.
However, since smaller $Q_i$ leads to smaller $\textnormal{tr} (C_{2i} Q_i C_{2i}^\top)$  and smaller $P_i$ and  $Q_i$  lead to smaller 	$\textnormal{tr} (C_{1i} Q_i P_i Q_i C_{1i}^\top)$, and consequently smaller feasible $\gamma$, we could try to find $P_i$ and  $Q_i$ as small as possible.
In  fact, one can find $P_i = P_i(\epsilon_i)>0$ to \eqref{ineq_Pi} by solving the Riccati equation
	\begin{equation*}
A_i^\top P_i + P_i A_i^\top - P_i B_i B_i^\top P_i +  C_{2i}^\top C_{2i} +\epsilon_i I_{n_i} =0
\end{equation*} 
with $\epsilon_i>0$ arbitrary.
Similarly, one can find $Q_i = Q_i(\sigma_i) >0$ to \eqref{ineq_Qi}  by solving the dual Riccati equation
	\begin{equation*}
A_i  Q_i + Q_i A_i^\top - Q_i C_{1i}^\top C_{1i} Q_i +  E_i E_i^\top + \sigma_i I_{n_i} =0
\end{equation*} 
with $\sigma_i>0$ arbitrary. 
By using a standard argument, it can be shown that $P_i(\epsilon_i)$ and $Q_i(\sigma_i) $  decrease as $\epsilon_i$ and $\sigma_i$ decrease, respectively. So $\epsilon_i$ and $\sigma_i$  should be taken close to $0$ to get smaller $P_i$ and $Q_i$.
\end{rem}

\section{Simulation example}\label{sec_simulation}
In this section, we will give a simulation example based on the example in \cite{WIELAND20111068} to illustrate the design method of Theorem \ref{main_thm}.

Consider a network of $N=6$ heterogeneous agents. The dynamics of the agents are given by
\begin{equation*}
\begin{aligned} 
\dot{x}_i & = A_i x_i  + B_i u_i  + E_i d_i,\\
y_i & = C_{1i} x_i  + D_{1i} d_i, \\
z_i & = C_{2i} x_i  + D_{2i} u_i  ,
\end{aligned}\qquad i = 1,2,\ldots,6,
\end{equation*}
where
 $A_i=
	\begin{pmatrix}
	0 & 1 & 0 \\
	0 & 0 &c_i \\
	0 & -f_i & -a_i
	\end{pmatrix}$, 
	$B_i = 
	 \begin{pmatrix}
	 0 \\ 0 \\ b_i
	 \end{pmatrix},$
	$ E_i = 
	 \begin{pmatrix}
	 0 & 0.2 \\
	 0 & 0 \\
	 0 & 0.2
	 \end{pmatrix},$
	$ C_{1i}  =  	
	 \begin{pmatrix}
	 1 & 0 & 0
	 \end{pmatrix}, $
	 $D_{1i} = 
	 \begin{pmatrix}
	 1 & 0
	 \end{pmatrix},$
$ C_{2i}  = 
	 \begin{pmatrix}
	 1 & 1 & 0\\
	 0 & 0 & 0
	 \end{pmatrix},$
	$ D_{2i} =
	 \begin{pmatrix}
	 0 \\ 1
	 \end{pmatrix}$.
The parameters $a_i$, $b_i$, $c_i$ and $f_i$ are chosen to be
\begin{align*}
	& a_i = 2, \ c_i = 1, \quad i = 1,2,\ldots,6 ,\\
	& b_1 = b_4 = 1,\ b_2 = b_5 = 2,\ b_3 = b_6 =  3, \\
	& f_1 = f_4 = 1,\ f_2 = f_5 = 2, \ f_3 = f_6 = 3.
\end{align*}
The pairs $(A_i, B_i)$ are stabilizable and the pairs $(C_{1i}, A_{i})$ are detectable. 
We also have that $D_{1i} E_i^\top =0$, $D_{2i}^\top C_{2i} =0$, $D_{1i} D_{1i}^\top = 1$ and $D_{2i}^\top D_{2i} = 1$.
The communication graph between the six agents is assumed to be an  undirected cycle graph.
%
The largest eigenvalue of  the corresponding Laplacian matrix $L$ is $\lambda_6 = 4$.

We choose the matrices $S$ and $R$ in  the regulator equations \eqref{regulation_eq} to be
\begin{equation*}
S =
\begin{pmatrix}
	0 & 1 \\
	0 & 0
\end{pmatrix}, \quad
R =
\begin{pmatrix}
	1 & 1  \\
	0 & 1
\end{pmatrix}.
\end{equation*}
The eigenvalues of $S$ are on the imaginary axis and the pair $(R, S)$ is observable.
We  solve the   equations \eqref{regulation_eq} and compute
\begin{equation*}
	\Pi_i = 
	\begin{pmatrix}
		1 & 0 \\
		0 & 1 \\
		0 & 0
	\end{pmatrix},\quad
	\Gamma_i =
	\begin{pmatrix}
		0 & 1
	\end{pmatrix},\quad i = 1,2,\ldots, 6.
\end{equation*}
The objective is to design a protocol of the form \eqref{protocol} such that the associated cost \eqref{h2_cost_output} satisfies $J < \gamma$ while achieving $\textbf{z}$-output synchronization.  Let the desired upper bound be $\gamma = 18$.

Following  the design method in Theorem \ref{main_thm}, for $i = 1,2,
\ldots, 6$, we  compute a positive definite solution $P_i$ to \eqref{ineq_Pi} by solving the  Riccati equation
\begin{equation*}
		A_i^\top P_i + P_i A_i^\top - P_i B_i B_i^\top P_i +  C_{2i}^\top C_{2i} + \epsilon I_{n_i} = 0
\end{equation*}
with $\epsilon =0.001$.
We also compute a positive definite solution $Q_i$ to \eqref{ineq_Pi} by solving the dual  Riccati equation
\begin{equation*}
		A_i  Q_i + Q_i A_i^\top - Q_i C_{1i}^\top C_{1i} Q_i +  E_i E_i^\top  + \sigma I_{n_i} = 0
\end{equation*}
with $\sigma =0.001$.
Accordingly, we compute the associated gain matrices $F_i$ and $G_i$ to be
\begin{align*}
F_1= F_4 &= 
\begin{pmatrix}
	   -1.0005  &  -1.7329   & -0.7326
\end{pmatrix}, \\
F_2 = F_5 &= 
\begin{pmatrix}
   -1.0005 &  -1.2345   & -0.4951
\end{pmatrix}, \\
F_3 = F_6 &= 
\begin{pmatrix}
   -1.0005  & -1.0327 &  -0.3982
\end{pmatrix},
\end{align*}
and
\begin{align*}
G_1 = G_4 &= 
\begin{pmatrix}
    0.3290 &
0.0341 &
0.0028
\end{pmatrix}^\top,\\
G_2 =G_5 &= 
\begin{pmatrix}
    0.2804 &
0.0193 &
0.0007
\end{pmatrix}^\top,\\
G_3  = G_6 &= 
\begin{pmatrix}
    0.2578 &
0.0132 &
0.0002
\end{pmatrix}^\top.
\end{align*}

As an example, we take the initial states of the agents to be 
$x_{10} = 
\begin{pmatrix}
1.0 & 1.4 & 1.6
\end{pmatrix}^\top$,
$x_{20} = 
\begin{pmatrix}
1.2 & -1.7 & 0.5
\end{pmatrix}^\top$,
$x_{30} = 
\begin{pmatrix}
1.3 & -1.2 & 1.3
\end{pmatrix}^\top$,
$x_{40} = 
\begin{pmatrix}
0.6 & 1.6 & -1.3
\end{pmatrix}^\top$,
$x_{50} = 
\begin{pmatrix}
1.8 & 1.5 & 1.6
\end{pmatrix}^\top$,
$x_{60} = 
\begin{pmatrix}
-1.1 & 1.7 & 0.9
\end{pmatrix}^\top$.
We take the initial states $w_i$ to be zero, and the initial states $v_i$ to be 
$v_{10} = 
\begin{pmatrix}
0.9 & 1.1
\end{pmatrix}^\top$,
$v_{20} = 
\begin{pmatrix}
0.8 & 1.4
\end{pmatrix}^\top$,
$v_{30} = 
\begin{pmatrix}
-1.0 & 0.9
\end{pmatrix}^\top$,
$v_{40} = 
\begin{pmatrix}
1.8 & 1.1
\end{pmatrix}^\top$,
$v_{50} = 
\begin{pmatrix}
-1.6 & 1.4
\end{pmatrix}^\top$,
$v_{60} = 
\begin{pmatrix}
1.1 & -1.2
\end{pmatrix}^\top$.
In Figures \ref{fig_output1} and \ref{fig_output2}, we have plotted the trajectories of the output vectors $z_i$, $i =1,2\ldots, 6$ of the controlled network. The proposed protocol indeed achieves $\textbf{z}$-output synchronization for the network.
\begin{figure}[t]
	\centering
	\includegraphics[width=0.9\columnwidth]{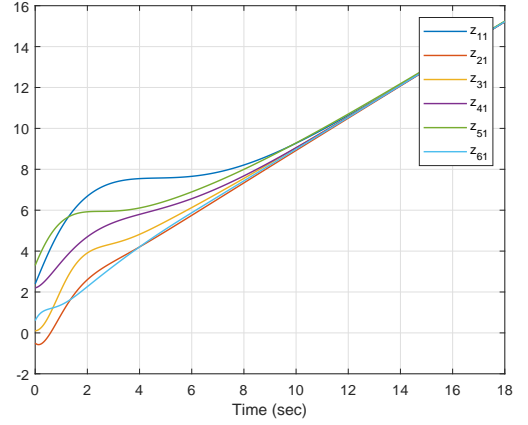}
	\caption{Plots of trajectories of the first component of the output vectors $z_{1},z_2, \ldots,z_6$} \label{fig_output1}
\end{figure}
\begin{figure}[t]
	\centering
	\includegraphics[width=0.9\columnwidth]{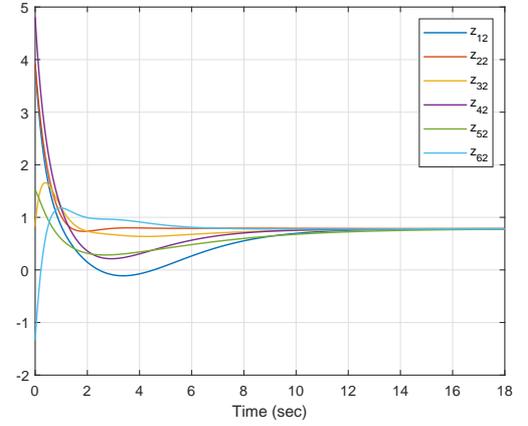}
	\caption{Plots of trajectories of the second component of the output vectors $z_{1}, z_2, \ldots,z_6$} \label{fig_output2}
\end{figure}
Moreover, for $i = 1,2 ,\ldots,6$, we compute  \[S_i = 	\textnormal{tr} (C_{1i} Q_i P_i Q_i C_{1i}^\top) +	\textnormal{tr} (C_{2i} Q_i C_{2i}^\top), \]
 and obtain that
\begin{align*}
S_1 = S_4 &=    0.6621, \ S_2 =S_5 =   0.4379,\
S_3 =S_6 =      0.3637.
\end{align*}
Note that, for all $i = 1,2 ,\ldots,6$, we have 
\[
	S_i < \frac{\gamma}{N\lambda_N} = 0.75,
\]
it then follows from Theorem \ref{main_thm} that the designed protocol is suboptimal, i.e. the associated cost is indeed smaller than the desired tolerance $\gamma = 18$.

\section{Conclusion}\label{sec_conclusion}
In this paper, we have studied the $\mathcal{H}_2$ suboptimal output synchronization problem for heterogeneous linear multi-agent systems. 
Given a heterogeneous multi-agent system and an associated $\mathcal{H}_2$  cost functional, we have provided a design method for computing dynamic output feedback based protocols that guarantee the associated cost to be smaller than a given upper bound while the controlled network achieves output synchronization. 
For each agent, its two local control gains are given in terms of solutions of two Riccati inequalities, each of dimension equal to that of the agent dynamics.
The computation of the local control gains  involves the largest eigenvalue of the Laplacian matrix of the communication graph.

\bibliographystyle{IEEEtran}
\bibliography{h2_auto}

\end{document}